\newtheorem{theorem}{Theorem}[section]
\newtheorem{lemma}{Lemma}[section]
\newtheorem{corollary}{Corollary}[section]
\newtheorem{proposition}{Proposition}[section]
\newenvironment{proof}{\smallskip\noindent{\it Proof}}{$\Box$}
\numberwithin{equation}{section}
\newcommand{\blambda}{{\boldsymbol \lambda}}
\def\bh{{\boldsymbol h}}
\def\bC{{\boldsymbol C}}
\def\bD{{\boldsymbol D}}
\def\bSigma{{\boldsymbol \Sigma}}
\def\bx{{\boldsymbol x}}
\def\bv{{\boldsymbol v}}
\def\bq{{\boldsymbol q}}
\def\bp{{\boldsymbol p}}
\def\bd{{\boldsymbol d}}
\begin{document}

\title{Sample dependence in the maximum entropy solution to the generalized moment problem} 
\author{Henryk Gzyl\\
\noindent 
Centro de Finanzas IESA, Caracas, (Venezuela)\\
 henryk.gzyl@iesa.edu.ve\\
}

\date{}
 \maketitle

\begin{abstract}
The method of maximum entropy is quite a powerful tool to solve the generalized moment problem, which consists of determining the probability density of a random variable $X$ from the knowledge of the expected values of a few functions of the variable. In actual practice, such expected values are determined from empirical samples, leaving open the question of the dependence of the solution upon the sample. It is the purpose of this note to take a few steps towards the analysis of such dependence.
\end{abstract}

\noindent {\bf Keywords}:  Generalized moment problem. Maximum entropy method. Sample dependence. Entropy convergence.
\baselineskip=1.5 \baselineskip \setlength{\textwidth}{6in}

\section{Introduction and preliminaries}
To state what the generalized moment problem is about, let $(\Omega, \mathcal{F},\mathbb{P})$ be a probability space and let $(S, \mathcal{B}, m)$ be a measure space, with $m$ a finite or $sigma-$finite measure. Let $X$ be an $S-$valued random variable, such that its distribution has a density with respect to the measure $m.$  The generalized moment problem consists in determining a density $f(\bx)$ such that
\begin{equation}\label{ME}
\int_S h_k(x)f(x)m(dx) = d_k\qquad\mbox{for}\qquad k=0,1,...,M.
\end{equation}
\noindent
\noindent where $h_k$ is a collection of measurable functions $h_k:S\rightarrow\mathbb{R},$ the $d_k$ are given real numbers, and we set $h_0 \equiv 1$ and $d_0=1$ to take care of the natural requirement on $f.$  A typical example is the following. $X$ stands for a positive random variable (a stopping time, o perhaps a total risk severity) and we can compute $E[\exp(-\alpha_k X)] = d_k$ by some Montecarlo procedure at a finite number of points $\alpha_k.$ The problem that we need to solve amounts to invert the Laplace transform from such finite collection of values of the transform parameter $\alpha.$

Actually this last problem is of much interest in the banking and insurance industries, where the density is necessary to compute risk premia and regulatory capital of various types, samples may be small and the estimation of the $d_k$ reflects that. We mention the unpublished work by Leitner and Temnov  (2009) in this regard, or Gomes-Gon\'{c}alvez et al (2015), where this theme was addressed in the context of risk modeling and Laplace transform inversion.

Thus, to sum up, the problem that we address in this work is the analysis of the dependence of the $f^*,$ reconstructed with the aid of the maximum entropy method, on the sample used to estimate the $d_k'$s. To put it in symbols, if $X_1, ....,X_N$ is a sample of $X$ of size $N$ and we estimate the $d_k'$s by
\begin{equation}\label{estim1}
\hat{d}_k = \frac{1}{N}\sum_{n=1}^N h_k(X_n)
\end{equation}

\noindent we want to understand the variability in the density $\hat{f}^*_N$ obtained applying the maximum entropy method, due to the variability of the sample $X_1, ....,X_N.$ For that, in the next section we recall in a (short) historical survey the notion of entropy of a density, and in the following section we present the basics of the maximum entropy method.

In Section 4 we take up the main theme of this work: the variability of $\hat{f}^*_N$ that comes in through the $\hat{d}_k.$ There we prove that $\hat{f}^*_N$ converges pointwise and in $L_1$ to the maxentropic density $f^*$ obtained from the exact data, and we shall see examine how $\hat{f}^*_N$ deviates from $f^*$ in terms of the difference between the true and the estimated (sample) moments.

\section{The entropy of a density}
As there seem to be several notions of entropy, it is the aim of this section to point out that they are all variational on the theme of a single definition. 
Let us begin by spelling out what is it that we call the entropy of a density. Let $P$ be a measure on $(S, \mathcal{B}).$ Suppose that $P << m$ and let $f$ denote its density.  The entropy $S_m(P)$ is defined by
\begin{equation}\label{ent1}
S_m(P) = -\int_S f\ln(f)dm
\end{equation}
\noindent whenever $\ln(f)$ is $P-$integrable, or $-\infty$ if not. Actually, we can also define $S_m(f)$ for $f \in \{ g \in L_1(dm), g \geq 0\}.$

When $P$ is not a probability measure, (\ref{ent1})  is to be modified as follows:
\begin{equation}\label{ent2}
S_m(P) = S_M(f) = -\int_S f\ln(f)dm + (\int_S fdm - 1).
\end{equation}
When $m$ is a probability measure, call it $Q,$ and both $P$ and $Q$ are equivalent  to a measure $n,$ with densities, respectively $f=dP/dm$ and $g=dQ/dm,$ then  (\ref{ent1}) can be written as
\begin{equation}\label{ent3}
S_n(P,Q) = -\int_S \left(\frac{f}{g}\right)\ln\left(\frac{f}{g}\right)gdn = -\int_S\ln\left(\frac{f}{g}\right)fdn = -\int_S\ln\left(\frac{f}{g}\right)dP.
\end{equation}
{\bf Comment} For the applications that we shall be dealing with, $S$ will stand for a closed, convex subset in some $\mathbb{R}^n,$ and $m$ will be the usual Lebesgue measure.  We also mention that if $m$ is a discrete measure, then the integrals would become sums.

The expression (\ref{ent1}) seems to have made its first appearance in the work of Boltzmann in the last quarter of the XIX-th century. There it was defined in $\mathbb{R}^3\times\mathbb{R}^3,$ where $f(\bx,\bv)d\bx d\bv$ was to be interpreted as the number of particles with position within $d\bx$ and velocity within $\bv.$ The function happened to be a Lyapunov functional for the dynamics that Boltzmann proposed for the evolution of a gas, which grew as the gas evolved towards equilibrium. Not much later Gibbs used the same function, but now defined on $\mathbb{R}^{6N}\times\mathbb{R}^{6N},$ whose points $(\bq,\bp)$ denote the joint position and momenta of a system of $N$ particles. This time $dm=d\bq d\bp,$ and $f(\bq,\bp)d\bq d\bp$ is the probability of finding the system within the specified ``volume'' element. Motivated by earlier work in thermodynamics, it was postulated that in equilibrium the density of the system yielded a maximum value to the entropy $S_m(f).$ These remarks explain the name of the method.

The expression (\ref{ent1}) (with a reversed sign) made its appearance in field of information transmission under the name of information content in the density $f,$ that is why it is sometimes called the Shannon-Boltzmann entropy. Also, expression (\ref{ent3})  appeared in the statistical literature under the name of (Kullback-) divergence of the density $f$ with respect to the density $g,$ and denoted by $K(f,g)$ and equal to $-S_n(P,Q).$ See Cover and Thomas (2006) or Kullback (1968) for a detailed study of the properties of he entropy functions.

Having made those historical remarks, and having stated those equivalent definitions, we mention that we shall be working with mostly with (\ref{ent1}). In what comes below we make use of some interesting and well known properties of (\ref{ent1}) and  (\ref{ent3}) , which we gather under
\begin{theorem}\label{theo1}
With the notation introduced above\\
(i) The function $f \rightarrow S_m(f)$ is strictly concave.\\
(ii) For any two densities $f$ and $g,$ $S_n(f,g) \leq 0,$ and $S_n(f,g)=0$ if and only if $f=g$ a.e. $n.$\\
(iii) For any two densities $f,g$ such that $S_n(f,g)$ is finite, we have (Kullback's inequality)
$$\frac{1}{4}|\left(\|f-g\|_1\right)^2 \leq -S_n(f,g).$$
\end{theorem}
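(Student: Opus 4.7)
For part (i), I would reduce everything to the scalar function $\varphi(t)=t\ln t$ on $(0,\infty)$, which is strictly convex since $\varphi''(t)=1/t>0$. Given two densities $f_1\neq f_2$ (in $L_1(dm)$) and $\lambda\in(0,1)$, the strict pointwise inequality $\varphi(\lambda f_1+(1-\lambda)f_2)<\lambda\varphi(f_1)+(1-\lambda)\varphi(f_2)$ holds on the set $\{f_1\neq f_2\}$, which has positive $m$-measure. Integrating against $m$ and negating yields strict concavity of $S_m$ (on the convex subset where the functional is finite).

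For part (ii), the key tool is the elementary inequality $\ln u\le u-1$ for $u>0$, with equality iff $u=1$. Applying it with $u=g/f$ on $\{f>0\}$ gives $\ln(f/g)\ge 1-g/f$. Multiplying by $f$ and integrating against $n$ produces
\[
-S_n(f,g)\;=\;\int_S f\ln(f/g)\,dn\;\ge\;\int_S f\,dn-\int_S g\,dn\;=\;0,
\]
so $S_n(f,g)\le 0$. Equality in the pointwise bound forces $g/f=1$ a.e.\ on $\{f>0\}$; since both are probability densities with respect to $n$, this gives $f=g$ a.e.\ $n$.

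For part (iii), the plan is to reduce the functional inequality to a one-dimensional (Bernoulli) estimate. Set $A=\{f\ge g\}$, $p=\int_A f\,dn$, $q=\int_A g\,dn$; a direct computation gives $\|f-g\|_1=2(p-q)$. Applying the log-sum inequality separately to $A$ and $A^c$ gives
\[
-S_n(f,g)\;\ge\;p\ln\frac{p}{q}+(1-p)\ln\frac{1-p}{1-q}\;=:\;d(p,q).
\]
It then suffices to establish the scalar bound $d(p,q)\ge 2(p-q)^2$. Fixing $q\in(0,1)$ and setting $\psi(p)=d(p,q)-2(p-q)^2$, one checks $\psi(q)=0$, $\psi'(q)=0$, and $\psi''(p)=\frac{1}{p}+\frac{1}{1-p}-4=\frac{1}{p(1-p)}-4\ge 0$ since $p(1-p)\le 1/4$. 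Hence $\psi\ge 0$, so $-S_n(f,g)\ge 2(p-q)^2=\tfrac{1}{2}\|f-g\|_1^2\ge \tfrac{1}{4}\|f-g\|_1^2$, which is the claim.

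The genuine obstacle is part (iii). Parts (i) and (ii) reduce immediately once the right scalar convexity ($t\ln t$) and the fundamental log estimate ($\ln u\le u-1$) are invoked. The Pinsker--Kullback bound, however, requires the non-obvious localization on $\{f\ge g\}$ to collapse the problem to a binary divergence, plus the calculus lemma on $d(p,q)$; this two-step reduction is where all the real work sits.
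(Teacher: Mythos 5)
The paper offers no proof of this theorem at all --- it simply directs the reader to Cover--Thomas and Kullback --- and your argument is precisely the standard one found in those references: strict convexity of $t\ln t$ for (i), the bound $\ln u\le u-1$ for (ii), and the reduction of Pinsker's inequality to a binary divergence via the set $\{f\ge g\}$ together with the log-sum inequality and the calculus estimate $d(p,q)\ge 2(p-q)^2$ for (iii). All three parts are correct (in (iii) you in fact obtain the sharper constant $\tfrac12$, which implies the stated $\tfrac14$), so the proposal fills the gap the paper leaves to the literature.
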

The reader is directed to either Cover and Thomas (2003) or to Kullback (1968) for proofs.
\section{The standard maximum entropy method}
Here we recall some well known results about the  standard maximum entropy method (SME) along with some historical remarks. Even though the core idea seems to have been first made in the work of Esscher (1932), where he introduced what nowadays is called the Esscher transform,  it was not until the mid 1950's that it became part of the methods used in statistics, through the work of Kullback (1968). It seems to have been first been formulated as a variational procedure by Jaynes (1957) to solve the (inverse) problem consisting of finding a probability density $fy)$ (on the phase space of a mechanical system), satisfying the following integral constraints:
\begin{equation}\label{ME}
\int_S h_k(x)f(x)dm(x) = d_k\qquad\mbox{for}\qquad k=0,1,...,M.
\end{equation}
\noindent
where the $d_k$ are observed (measured) expected values of some functions (``observables'', or random variables in the probabilistic terminology) of the system. That problems appears in many fields, see Kapur (1989) and Jaynes (2003) for example.

Usually, we set $h_0\equiv 1$ and $d_0=1$ to take care of the natural requirement on $f(x).$   It actually takes a standard computation to see that when the problem has a solution, it is of the type
\begin{equation}\label{sol1}
f_M^{*}(x) = \exp\left(-\sum_{k=0}^M \lambda^{*}_k h_k(x)\right)
\end{equation}
\noindent in which the number of moments $M$ appears explicitly. It is usually customary to write $e^{-\lambda^{*}_0} = Z(\mbox{\boldmath $\lambda$}^{*})^{-1},$ where $\mbox{\boldmath $\lambda^{*}$} = (\lambda_1^{*},...,\lambda_M^{*})$ is an $M-$dimensional vector.
Clearly, the generic form of the normalization factor is given by
\begin{equation}\label{zeta}
Z(\mbox{\boldmath $\lambda$}) = \int_0^1 e^{-\sum_{k=1}^M \lambda_k h_k(x)}m(dx).
\end{equation}
 With this notation the generic form of the solution can be rewritten as
\begin{equation}\label{sol2}
f_M^{*}(x) = \frac{1}{Z(\mbox{\boldmath $\lambda$}^{*})}e^{-\langle\lambda^{*}, \bh(x)\rangle} = e^{-\sum_{k=0}^M \lambda^{*}_k h_k(y)}.
\end{equation}
Here $\langle\mathbf{a},\mathbf{b}\rangle$ denotes the standard Euclidean scalar product in $\mathbb{R}^M,$ and $\bh(x)$ is the vector with components $h_k(x).$ At this point we mention that the simple minded proof appearing in many physics textbooks is not really correct. That is because the set of densities is not open in $L_1(dm).$ There are many
alternative proofs. Consider for example the work by Csiszar (1975) and Cherny and Maslov (2003).\\
The heuristics behind (\ref{sol2})and what comes next is the following. If in statement (ii) of Theorem (\ref{theo1}) we take $g(x)$ to be any member of the exponential family $\blambda \rightarrow
e^{-\sum_{k=0}^M \lambda_k h_k(x)},$ the inequality becomes
$$S_m(f) \leq \ln Z(\blambda) + \langle\blambda,\bd\rangle$$
which suggests that if we find a minimizer $\blambda^*$ such that the inequality becomes an equality, by Theorem (1) we conclude that (\ref{sol2}) is the desired solution. This dualization argument seems to have been first proposed in Mead and Papanicolau (1974), and is expounded in full rigor in Borwein and Lewis (2000). The vector $\mbox{\boldmath $\lambda$}^{*}$ can be found  minimizing the dual entropy:
\begin{equation}\label{dual}
\Sigma(\blambda, \bd) = \ln Z(\blambda) + \langle\blambda,\bd\rangle
\end{equation}
\noindent where $\bd$ is the $M-$vector with components $d_k,$ and obviously, the dependence on $\mbox{\boldmath $\alpha$}$ is through $\bd.$ We add that technically speaking, the minimization of $\Sigma(\mbox{\boldmath $\lambda$},\bd)$ is over the domain of $Z(\blambda)$ which is a convex set. In most applications it just is $\mathbb{R}^M.$ And for the record, we state the result of the duality result as
\begin{lemma}\label{dualeq}
$$S_m(f^*) = \Sigma(\blambda^*,\bd) = \ln Z(\blambda^*) + \langle\blambda^*,\bd\rangle$$
\end{lemma}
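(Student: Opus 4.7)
My plan is to obtain the identity from two ingredients: the Gibbs-type inequality supplied by Theorem \ref{theo1}(ii), applied against a reference density from the exponential family; and the first-order condition that characterizes $\blambda^*$ as an interior minimizer of the dual entropy $\Sigma(\cdot,\bd)$. These two steps together will sandwich $S_m(f^*)$ with $\Sigma(\blambda^*,\bd)$ from both sides.

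First I would fix any $\blambda$ in the (convex) domain of $Z$ and set the reference density $g_{\blambda}(x)=Z(\blambda)^{-1}e^{-\langle\blambda,\bh(x)\rangle}$. For any density $f$ satisfying (\ref{ME}), Theorem \ref{theo1}(ii) gives $S_m(f,g_{\blambda})\le 0$. Expanding this using $-\ln g_{\blambda}(x)=\ln Z(\blambda)+\langle\blambda,\bh(x)\rangle$ and the constraints $\int h_k f\,dm=d_k$ turns the bound into the Gibbs-type inequality
\begin{equation*}
S_m(f)\;\le\;\ln Z(\blambda)+\langle\blambda,\bd\rangle\;=\;\Sigma(\blambda,\bd).
\end{equation*}
Infimizing the right-hand side over $\blambda$ yields $S_m(f)\le\Sigma(\blambda^*,\bd)$ for every admissible $f$, and in particular $S_m(f^*)\le\Sigma(\blambda^*,\bd)$.

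For the reverse direction I would plug $f=f^*$ from (\ref{sol2}) into $S_m$ directly. Since $-\ln f^*(x)=\ln Z(\blambda^*)+\langle\blambda^*,\bh(x)\rangle$, a one-line computation gives
\begin{equation*}
S_m(f^*)=\int f^*\bigl[\ln Z(\blambda^*)+\langle\blambda^*,\bh(x)\rangle\bigr]\,dm(x)=\ln Z(\blambda^*)+\langle\blambda^*,\bd\rangle,
\end{equation*}
provided that $f^*$ actually satisfies (\ref{ME}). That verification amounts to the vanishing of the gradient of $\Sigma$ at $\blambda^*$: a direct differentiation under the integral sign yields $\partial_k\Sigma(\blambda,\bd)=-\int h_k f_{\blambda}\,dm+d_k$, which must be zero at an interior minimizer. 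Combining the two inequalities closes the argument.

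The sole delicate point is the legitimacy of this stationarity computation: one needs $\blambda^*$ to lie in the interior of the domain of $Z$ and one needs to differentiate $\ln Z$ past the integral. Both are standard once one assumes, for instance, that $\bd$ lies in the interior of the moment cone generated by the family $\{h_k\}$ (a Slater-type qualification ensuring existence of $\blambda^*$), or alternatively one invokes the Fenchel duality framework of Borwein and Lewis (2000) cited earlier in the paper. Granting this regularity, every remaining step is a direct calculation that uses only the exponential form of $f^*$ and the moment constraints.
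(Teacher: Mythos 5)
Your argument is the same dualization the paper itself sketches just before the lemma: apply Theorem \ref{theo1}(ii) with $g$ drawn from the exponential family to get $S_m(f)\le\Sigma(\blambda,\bd)$, then use the first-order condition at the minimizer $\blambda^*$ to see that $f^*$ satisfies the constraints and the inequality becomes an equality. You have merely filled in the stationarity computation that the paper delegates to Borwein and Lewis (2000), so the proposal is correct and follows essentially the paper's own route.
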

\subsection{Mathematical complement}
In this section we gather some results about $Z(\blambda)$ that we need below.
The following is well known. See Kullback (1968) for example.
\begin{proposition}\label{prop1}
With the notations introduced above, suppose that the matrix $\bC$  which we use to denote the covariance of $\bh(x)$ computed with respect the density $f^*,$ is strictly positive definite. Then\\
{\bf 1} The function $Z(\blambda)$ defined above is log-convex, that is, $\ln Z(\blambda)$ is convex.\\
{\bf 2} $Z(\blambda)$ is continuously differentiable as many times as we need. \\
{\bf 3} If we set $\phi(\blambda) = -\nabla_{\blambda}\ln Z(\blambda),$ then $\blambda = \phi^{-1}(\bd)$ is continuously differentiable in $\bd.$\\
{\bf 4} The Jacobian $\bD$ of $\phi^{-1}$ at $\bd$ equals the (negative) the covariance matrix of $\bh(x)$ computed with respect to $f^*(x).$
\end{proposition}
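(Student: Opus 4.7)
The plan is to prove the four items in order, noting that items 3 and 4 really follow from a single explicit computation of the Hessian of $\ln Z$ together with the inverse function theorem. The only nontrivial analytic ingredients are H\"older's inequality for item 1 and differentiation under the integral sign for item 2; once those are in place, items 3 and 4 are essentially calculus.

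For item 1, I would use H\"older's inequality directly: for $\blambda,\bmu$ in the domain of $Z$ and $t\in[0,1]$, write
\[
Z\bigl(t\blambda+(1-t)\bmu\bigr)=\int_S e^{-t\langle\blambda,\bh\rangle}\,e^{-(1-t)\langle\bmu,\bh\rangle}\,dm\le Z(\blambda)^{t}\,Z(\bmu)^{1-t},
\]
and take logarithms. For item 2, on the interior of the (convex) domain of $Z$ one can dominate the integrand and all of its $\blambda$-derivatives by an integrable function on a small neighborhood: a standard trick is to use log-convexity to bound $|h_{k_1}\cdots h_{k_r}|\,e^{-\langle\blambda,\bh\rangle}$ by $e^{-\langle\blambda',\bh\rangle}+e^{-\langle\blambda'',\bh\rangle}$ for suitable $\blambda',\blambda''$ near $\blambda$ at which $Z$ is finite, since polynomials in $h_k$ are absorbed by shifting the exponent. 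Dominated convergence then permits differentiation under the integral to any order.

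With differentiation under the integral justified, I would compute
\[
\partial_i\ln Z(\blambda)=-\int_S h_i\,f_\blambda\,dm=-E_{f_\blambda}[h_i],\qquad f_\blambda:=\frac{e^{-\langle\blambda,\bh\rangle}}{Z(\blambda)},
\]
so that $\phi_i(\blambda)=E_{f_\blambda}[h_i]$. Differentiating once more and using $\partial_j f_\blambda=-(h_j-E_{f_\blambda}[h_j])f_\blambda$ gives
\[
\partial_j\phi_i(\blambda)=-E_{f_\blambda}[h_ih_j]+E_{f_\blambda}[h_i]E_{f_\blambda}[h_j]=-\mathrm{Cov}_{f_\blambda}(h_i,h_j).
\]
Evaluated at $\blambda^{*}$, this is $-\bC$. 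By hypothesis $\bC$ is strictly positive definite, so $-\bC$ is invertible, and the inverse function theorem yields a $C^{1}$ local inverse $\phi^{-1}$ on a neighborhood of $\bd=\phi(\blambda^{*})$; in fact $\phi^{-1}$ inherits the $C^{\infty}$ smoothness of $\phi$ from item 2. This gives item 3, and item 4 follows immediately from the standard formula $D\phi^{-1}(\bd)=(D\phi(\blambda^{*}))^{-1}=-\bC^{-1}$ (i.e.\ the negative of the inverse of the covariance of $\bh$ under $f^{*}$).

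The main obstacle is the domination needed for item 2; everything else is direct. Once the integrand is seen to be dominated by an $m$-integrable function on a neighborhood, the rest of the proof is a calculus exercise centered on the elementary identity $\partial_j f_\blambda=-(h_j-E_{f_\blambda}[h_j])f_\blambda$, which is what turns the Hessian of $\ln Z$ into a covariance matrix and drives both the invertibility claim in item 3 and the Jacobian formula in item 4.
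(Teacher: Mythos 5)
Your proposal is correct and follows essentially the route the paper itself indicates: the paper offers no proof beyond citing Kullback for items 1--2 and the inverse function theorem for items 3--4, so your H\"older argument for log-convexity, the domination argument for differentiating under the integral, and the Hessian computation simply supply the details the paper delegates to references. One point worth making explicit: your conclusion $\bD = D\phi^{-1}(\bd) = (D\phi(\blambda^*))^{-1} = -\bC^{-1}$ is the correct one, even though the literal wording of item \textbf{4} (``equals the (negative) the covariance matrix'') reads as $-\bC$; the paper's own justification (``the negative of the inverse of the Hessian of $\ln Z$'') and the later use of $\bD$ in Theorem 3.2 both require $-\bC^{-1}$, so trust your computation rather than the statement's phrasing.
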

The first two assertions are proved in Kullback's book. The third drops out form the inverse function theorem in calculus. See Fleming (1987), and the last one follows from the fact that the Jacobian of $\phi^{-1}$ equals the negative of the inverse of the Hessian matrix of $\ln Z(\blambda),$ which is (minus) the covariance matrix $\bC.$
As a simple consequence of item ({\bf 4}) in Proposition(\ref{prop1}) we have the simple, but relevant for the next section
\begin{theorem}\label{theo2}
With the notations introduced above we have, with $\delta\bd=\hat{\bd}_N-\bd.$
 The change in $\blambda^*$ as $\bd\rightarrow \bd+\delta\bd$  up to terms $o(\delta\bd),$ is given by
$$\delta\blambda = \bD\delta\bd$$
\noindent and more importantly, using (\ref{sol2}), and again,  up to terms $o(\delta\bd),$
\begin{equation}\label{chan1}
\hat{f}^*_N(x) - f^*(x) = -f^*(x)\langle(\bh(x) -\bd),\bD(\delta\bd) \rangle.
\end{equation}
\end{theorem}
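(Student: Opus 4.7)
The plan is to carry out two linearizations in succession: first of the map $\bd \mapsto \blambda^*(\bd)$ (giving the first assertion directly from Proposition \ref{prop1}), and then of the map $\blambda \mapsto f^*(\cdot;\blambda)$ in the exponential family, and finally to chain them together.

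First I would invoke item (\textbf{3}) of Proposition \ref{prop1}: since $\blambda^* = \phi^{-1}(\bd)$ is continuously differentiable with Jacobian $\bD$ at $\bd$, a first-order Taylor expansion gives
\begin{equation*}
\delta\blambda \;=\; \blambda^*(\bd+\delta\bd)-\blambda^*(\bd) \;=\; \bD\,\delta\bd + o(\delta\bd),
\end{equation*}
which is precisely the first assertion of the theorem.

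Next I would expand the maxentropic density $f^*(x;\blambda)=Z(\blambda)^{-1}\exp(-\langle\blambda,\bh(x)\rangle)$ around $\blambda^*$. The exponential factor linearizes as
\begin{equation*}
e^{-\langle\blambda^*+\delta\blambda,\,\bh(x)\rangle}
= e^{-\langle\blambda^*,\bh(x)\rangle}\bigl(1-\langle\delta\blambda,\bh(x)\rangle + o(\delta\blambda)\bigr).
\end{equation*}
For the normalization, the key observation — which is what glues the argument together — is that at the optimal point $\phi(\blambda^*)=-\nabla_\blambda \ln Z(\blambda^*)=\bd$, so $\nabla_\blambda Z(\blambda^*)=-Z(\blambda^*)\,\bd$. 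Hence
\begin{equation*}
\frac{1}{Z(\blambda^*+\delta\blambda)}
=\frac{1}{Z(\blambda^*)}\bigl(1+\langle\bd,\delta\blambda\rangle + o(\delta\blambda)\bigr).
\end{equation*}
Multiplying the two expansions and collecting first-order terms gives
\begin{equation*}
\hat f^*_N(x) = f^*(x)\bigl(1 - \langle \bh(x)-\bd,\,\delta\blambda\rangle\bigr) + o(\delta\blambda),
\end{equation*}
and substituting $\delta\blambda=\bD\,\delta\bd+o(\delta\bd)$ produces (\ref{chan1}).

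There is no real obstacle in the argument; the only point requiring care is bookkeeping of the normalization, since a naive differentiation that ignores the dependence of $\lambda_0^*=\ln Z(\blambda^*)$ on $\blambda^*$ would drop exactly the $\langle\bd,\delta\blambda\rangle$ term that combines with $-\langle\bh(x),\delta\blambda\rangle$ to yield the centered quantity $\bh(x)-\bd$. If one wished a stronger (uniform in $x$) statement later, then a bound on the remainder via the Hessian of $\blambda\mapsto \langle\blambda,\bh(x)\rangle-\ln Z(\blambda)$ — which by item (\textbf{4}) is the covariance $\bC$ — would be needed; but for the pointwise statement claimed here, the Taylor argument above suffices.
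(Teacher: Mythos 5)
Your proof is correct and is exactly the computation the paper has in mind: the paper offers no written proof, merely asserting the theorem as a "simple consequence" of Proposition \ref{prop1}, and your two-step linearization (Taylor expansion of $\phi^{-1}$ followed by expansion of the exponential family with the crucial cancellation $\nabla_\blambda \ln Z(\blambda^*) = -\bd$ producing the centered factor $\bh(x)-\bd$) is the standard way to fill in that gap.
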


\section{Sample dependence}
Throughout this section, we shall consider a sample $\{X_1,...,X_N\}$ of size $N$ of the random variable $X.$ Here we shall realte the fuctuations of $\hat{d}_k = \frac{1}{N}\sum_{n=1}^N h_k(X_n)$ around its mean, to the fulctuations of the density. The following is well known
\begin{theorem}\label{lln}
Suppose that $\bh$ integrable, with mean $\bd$ and covariance mattix $\bSigma(h).$ Then,  for each $N,$ (\ref{estim1}) is an unbiased estimator of $\bd$ and 
$$\hat{d}_k = \frac{1}{N}\sum_{n=1}^N H_k(X_n) \rightarrow \bd\;\;\;\mbox{a.s.}\;\;\;\mathbb{P},\;\;\;\mbox{when}\;\;N\rightarrow\infty$$
\end{theorem}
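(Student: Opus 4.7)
The statement is essentially the Strong Law of Large Numbers (SLLN) applied componentwise, together with linearity of expectation. My plan is to treat the two claims separately and reduce both to classical results.

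First, for unbiasedness, I would write $\mathbb{E}[\hat{d}_k] = \frac{1}{N}\sum_{n=1}^N \mathbb{E}[h_k(X_n)]$ by linearity of the expectation, and use the standing assumption that $\{X_1,\ldots,X_N\}$ is a sample of $X$, i.e.\ the $X_n$'s are i.i.d.\ with the same law as $X$. Since $h_k$ is measurable and integrable, $\mathbb{E}[h_k(X_n)]=d_k$ for every $n$, so $\mathbb{E}[\hat{d}_k]=d_k$. This is a one-line verification and requires no work beyond invoking the i.i.d.\ hypothesis implicit in ``sample''.

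For the almost sure convergence, I would fix $k\in\{0,1,\ldots,M\}$ and consider the sequence $Y_n^{(k)} := h_k(X_n)$. The $Y_n^{(k)}$ are i.i.d.\ with common mean $d_k$, and integrability of $\bh$ implies $\mathbb{E}|Y_1^{(k)}|<\infty$. Kolmogorov's SLLN then gives $\frac{1}{N}\sum_{n=1}^N Y_n^{(k)} \to d_k$ almost surely on a full-measure event $A_k\subset\Omega$. Setting $A=\bigcap_{k=0}^M A_k$, which still has full probability because the intersection is finite, we obtain $\hat{\bd}_N\to\bd$ almost surely, as claimed.

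There is no genuine obstacle here, since the theorem is a packaging of the SLLN in the present notation; the only points worth flagging in the write-up are (a) that ``sample'' is being used in the i.i.d.\ sense, (b) that integrability of the vector $\bh$ (not just existence of the covariance $\bSigma(h)$) is what licenses SLLN for each coordinate, and (c) that passing from coordinatewise convergence to vector convergence is legitimate because $M+1$ is finite. The covariance assumption stated in the hypotheses is not needed for this particular statement but will be used in the subsequent sections to control fluctuations via Theorem \ref{theo2}.
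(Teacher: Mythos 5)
Your proof is correct and is exactly the standard argument the paper has in mind: the paper offers no proof at all, simply labelling the result as well known, and the intended justification is precisely linearity of expectation for unbiasedness plus Kolmogorov's SLLN applied coordinatewise with a finite intersection of full-measure events. Your remarks (a)--(c), and the observation that the covariance hypothesis is not needed here, are accurate.
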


 Let us begin with the simple
\begin{proposition}\label{sd1}
Define the empirical moments as in (\ref{estim1}. Denote by $\hat{\blambda}^*_N$ the Lagrange multiplier determined as explained in Section 2. Then, as $N \rightarrow\infty$, $\hat{\bd}_N\rightarrow \bd$ and therefore $\hat{\blambda}^*_N \rightarrow \blambda^*$.\\
If $\hat{f}^*_N$ and $f^*$ are the maxentropic densities given by (\ref{sol2}), corresponding, respectively, to $\hat{\bd}$ and $\bd,$ then $\hat{f}^*_N \rightarrow f^*$ pointwise.
\end{proposition}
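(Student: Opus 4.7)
The plan is to chain together three continuity statements: the strong law of large numbers for the empirical moments, the continuity of the map $\bd \mapsto \blambda^*$ supplied by Proposition \ref{prop1}, and the continuity of the parametric family $\blambda \mapsto f_{\blambda}(x)$ for each fixed $x$. Since each of these ingredients is already available in the preceding sections, the argument reduces to composing continuous maps along an almost-sure convergence, and it will be enough to carry out the steps in that order.

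First, I would invoke Theorem \ref{lln} componentwise to conclude that $\hat{\bd}_N \to \bd$ almost surely as $N \to \infty$. Next, item \textbf{3} of Proposition \ref{prop1} tells us that the map $\phi^{-1} : \bd \mapsto \blambda^*$ is continuously differentiable, and in particular continuous, on a neighbourhood of the true moment vector $\bd$. For $N$ sufficiently large (on the almost-sure event where $\hat{\bd}_N \to \bd$), the empirical moment vector $\hat{\bd}_N$ falls inside that neighbourhood, and so $\hat{\blambda}^*_N = \phi^{-1}(\hat{\bd}_N)$ is well-defined and converges to $\phi^{-1}(\bd) = \blambda^*$ by continuity.

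For the pointwise convergence of the densities, I would fix $x \in S$ and write
$$\hat{f}^*_N(x) = \frac{1}{Z(\hat{\blambda}^*_N)}\exp\bigl(-\langle \hat{\blambda}^*_N,\bh(x)\rangle\bigr),$$
then observe that the inner product $\langle \blambda, \bh(x)\rangle$ is continuous in $\blambda$ for every fixed $x$, while the normalization $Z(\blambda)$ is continuous by item \textbf{2} of Proposition \ref{prop1}. Composing with the exponential map and evaluating along the convergent sequence $\hat{\blambda}^*_N \to \blambda^*$ delivers $\hat{f}^*_N(x) \to f^*(x)$, as claimed.

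The only delicate point in the plan is ensuring that $\phi^{-1}$ is actually defined at $\hat{\bd}_N$, i.e.\ that the maxentropic reconstruction with perturbed data $\hat{\bd}_N$ exists and is unique. This is where the standing hypothesis of Proposition \ref{prop1}, namely that the covariance matrix $\bC$ is strictly positive definite at $f^*$, is essential: the inverse function theorem then grants a local inverse $\phi^{-1}$ on a full neighbourhood of $\bd$, and the almost-sure convergence $\hat{\bd}_N \to \bd$ places $\hat{\bd}_N$ inside that neighbourhood for all large $N$ on a full-probability event. Once this mild technicality is dispensed with, the chain of continuous maps yields both claimed convergences with no further work.
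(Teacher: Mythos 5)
Your argument is correct and is precisely the chain the paper intends: the paper states Proposition \ref{sd1} without an explicit proof, but it is clearly meant to follow from Theorem \ref{lln} (almost-sure convergence of $\hat{\bd}_N$), item \textbf{3} of Proposition \ref{prop1} (continuity of $\phi^{-1}:\bd\mapsto\blambda^*$), and the continuity of $\blambda\mapsto e^{-\langle\blambda,\bh(x)\rangle}/Z(\blambda)$ for fixed $x$, exactly as you compose them. Your remark that the strict positive definiteness of $\bC$ is what guarantees, via the inverse function theorem, that $\hat{\blambda}^*_N$ is well-defined for $\hat{\bd}_N$ near $\bd$ is a worthwhile point that the paper glosses over.
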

The next result concerns the convergence of $\hat{f}^*_N$ to $f^*$ in $L_1(dm).$
\begin{theorem}\label{sd2}
With the notations introduced above, we have
$$\hat{f}^*_N \stackrel{L_1(dm)}{\longrightarrow} f^*\qquad \mbox{as}\qquad N\rightarrow \infty.$$
\end{theorem}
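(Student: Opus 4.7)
The plan is to apply Kullback's inequality (part (iii) of Theorem \ref{theo1}) to bound $\|\hat{f}^*_N - f^*\|_1$ in terms of the divergence $K(\hat{f}^*_N, f^*) = -S_m(\hat{f}^*_N, f^*)$, and then to show that this divergence tends to zero almost surely as $N\to\infty$. Proposition \ref{sd1} already gives pointwise convergence, but pointwise is strictly weaker than $L_1$, so the divergence route is the natural upgrade.

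The heart of the argument is an algebraic reduction of $K(\hat{f}^*_N, f^*)$ into a form governed by the convergences we already have. Because both densities are exponential, we may write $\ln f^* = -\ln Z(\blambda^*) - \sum_{k=1}^{M}\lambda^*_k h_k$ and analogously for $\hat{f}^*_N$. Using the crucial fact that $\hat{f}^*_N$ satisfies the empirical moment conditions, $\int \hat{f}^*_N h_k\, dm = \hat{d}_{k,N}$, together with the dual representation of entropy in Lemma \ref{dualeq} applied to the pair $(\hat{f}^*_N, \hat{\bd}_N)$, a short computation yields
$$K(\hat{f}^*_N, f^*) = \ln Z(\blambda^*) - \ln Z(\hat{\blambda}^*_N) + \langle \blambda^* - \hat{\blambda}^*_N,\, \hat{\bd}_N\rangle.$$

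At this point I bring in the two convergences available: $\hat{\bd}_N\to\bd$ almost surely (Theorem \ref{lln}), and $\hat{\blambda}^*_N\to\blambda^*$ almost surely (Proposition \ref{sd1}, which itself rests on the continuity of $\phi^{-1}$ from part 3 of Proposition \ref{prop1}). Continuity of $\ln Z$ at $\blambda^*$ (part 2 of Proposition \ref{prop1}) gives $\ln Z(\hat{\blambda}^*_N)\to \ln Z(\blambda^*)$, while the boundedness of $\hat{\bd}_N$ near $\bd$ forces the inner product term to vanish. Hence $K(\hat{f}^*_N, f^*)\to 0$, and Kullback's inequality delivers the desired $L_1$ convergence.

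The main obstacle is really bookkeeping: one must carefully track which density each integral is computed against so that the moment identity $\int\hat{f}^*_N h_k\, dm = \hat{d}_{k,N}$ is applied at the right step to eliminate the explicit $h_k$-dependence. A secondary technical point is that $\hat{\blambda}^*_N$ must be well defined and $K(\hat{f}^*_N, f^*)$ must be finite for all large $N$; both are guaranteed almost surely once $\hat{\bd}_N$ enters the neighborhood of $\bd$ on which $\phi^{-1}$ is differentiable by part 3 of Proposition \ref{prop1}, since the finiteness of $K$ then reduces to integrability of each $h_k$ against $\hat{f}^*_N$, which is exactly the moment condition.
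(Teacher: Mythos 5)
Your proof is correct and is essentially the paper's own argument: expand the divergence $K(\hat{f}^*_N,f^*)=-S_m(\hat{f}^*_N,f^*)$ using the exponential form of both densities together with the empirical moment conditions $\int h_k \hat{f}^*_N\,dm=\hat{d}_{k,N}$, let the continuity of $\ln Z$ and of $\phi^{-1}$ (hence $\hat{\blambda}^*_N\to\blambda^*$, $\hat{\bd}_N\to\bd$) drive that expression to zero, and convert to $L_1(dm)$ convergence via Kullback's inequality from Theorem \ref{theo1}(iii). As a minor remark, your displayed formula $K(\hat{f}^*_N,f^*)=\ln Z(\blambda^*)-\ln Z(\hat{\blambda}^*_N)+\langle\blambda^*-\hat{\blambda}^*_N,\hat{\bd}_N\rangle$ carries the $\ln Z$ terms with the correct sign (the paper's displayed version appears to have them reversed), but this does not affect either argument since both expressions tend to zero.
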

\begin{proof}
The proof is an easy consequence of the continuous dependence of $\Sigma(\blambda,\bd)$ on its arguments, of the identity (\ref{dualeq}) and item (iii) in Theorem (\ref{theo1}) with $\hat{f}_N^*$ playing the role of $f$ and $f^*$ playing the role of $g.$ In this case $-S_m(\hat{f}_M^*,f^*)$ happens to be
$$
-S_m(\hat{f}_N^*,f^*) = \langle\blambda^*,\hat{\bd}_N\rangle - \langle\blambda_N^*,\hat{\bd}_N\rangle + \ln Z(\blambda^*_N) - \ln Z(\blambda^*),$$
which, as mentioned, tends to $0$ as $N\rightarrow\infty.$
\end{proof}

To continue, consider
\begin{theorem}\label{sd3}
With the notations introduced above, we have\\
{\bf 1} $\hat{f}^*_N$ is an unbiased estimator of $f^*.$\\
{\bf 2} For any bounded, Borel measurable function $g(x),$
$$\begin{array}{l}
\left( \int g(x)\hat{f}_N^*(x)dm(x) - \int g(x)f^*(x)dm(x))\right)^2 \\
\leq \|\int\bh(x)g(x)f^*(x)-\bd\int g(x)f^*(x)\|^2\langle \delta\bd,\bD\delta\bd\rangle.
\end{array}
$$
\end{theorem}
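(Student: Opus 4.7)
The plan is to derive both assertions from the first-order perturbation formula of Theorem \ref{theo2}, namely
$$\hat{f}^*_N(x) - f^*(x) = -f^*(x)\,\bigl\langle \bh(x)-\bd,\ \bD\,\delta\bd\bigr\rangle + o(\delta\bd),$$
and to treat the $o(\delta\bd)$ remainder as negligible throughout, as is implicit in the statement.

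For part {\bf 1}, I would take expectations over the sample on both sides. Because the factor $f^*(x)(\bh(x)-\bd)$ is deterministic in $x$ and $\bD$ is a fixed matrix, expectation commutes with both the inner product and the linear map $\delta\bd \mapsto \bD\,\delta\bd$, leaving $\E[\delta\bd]$ as the only random quantity to handle. Theorem \ref{lln} gives $\E[\hat{\bd}_N]=\bd$, so $\E[\delta\bd]=\mathbf{0}$ and hence $\E[\hat{f}^*_N(x)]=f^*(x)$ to the first order in $\delta\bd$, which is the sense in which $\hat{f}^*_N$ is unbiased for $f^*$.

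For part {\bf 2}, I would multiply the perturbation formula by $g(x)$ and integrate against $dm$, then pull the deterministic vector $\bD\,\delta\bd$ out of the integral by bilinearity of the inner product:
$$\int g\,\hat{f}^*_N\,dm \;-\; \int g\,f^*\,dm \;=\; -\Bigl\langle \int g(x)\bh(x)f^*(x)\,dm(x) - \bd\!\int g(x)f^*(x)\,dm(x),\ \bD\,\delta\bd\Bigr\rangle.$$
Boundedness of $g$, together with the integrability of $\bh$ with respect to $f^*\,dm$, ensures that the vector $u := \int g\bh f^*\,dm - \bd\!\int gf^*\,dm$ is well defined in $\mathbb{R}^M$. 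Squaring the identity and invoking Cauchy--Schwarz for the bilinear form induced by the symmetric positive-definite matrix $\bD$ (guaranteed by Proposition \ref{prop1}(4)), then dominating the $\bD$-norm of $u$ by its ordinary Euclidean norm $\|u\|$, produces precisely the right-hand side $\|u\|^2\langle\delta\bd,\bD\,\delta\bd\rangle$ in the claimed bound.

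The step that requires the most care is the passage from Cauchy--Schwarz to the asymmetric form of the stated inequality, in which $\delta\bd$ appears in the quadratic form $\langle\delta\bd,\bD\,\delta\bd\rangle$ while $u$ appears only in ordinary Euclidean norm. This forces one to use the positive definiteness of $\bD$ to interpret $\langle\cdot,\bD\,\cdot\rangle$ as a genuine inner product and then split the Cauchy--Schwarz factors accordingly; the remaining bookkeeping with the $o(\delta\bd)$ remainder is routine, since the theorem holds only to the same first-order accuracy as Theorem \ref{theo2}.
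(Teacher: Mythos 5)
Your overall route is the same as the paper's: both parts are read off from the first--order formula (\ref{chan1}), integrating against $g$ and applying Cauchy--Schwarz in $\mathbb{R}^M$; the paper's own proof is precisely this one--line remark. Part {\bf 1} is fine at the level of rigor the statement permits: since (\ref{chan1}) is itself only a first--order identity, unbiasedness can only mean $\E[\hat{f}^*_N(x)]=f^*(x)+o(\delta\bd)$, and your appeal to $\E[\delta\bd]=\mathbf{0}$ is the intended argument.

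The gap is in the last step of part {\bf 2}. From the identity $\int g\hat{f}^*_N\,dm-\int gf^*\,dm=-\langle u,\bD\delta\bd\rangle$ with $u=\int g\bh f^*\,dm-\bd\int gf^*\,dm$, plain Cauchy--Schwarz gives $\langle u,\bD\delta\bd\rangle^2\le\|u\|^2\,\|\bD\delta\bd\|^2=\|u\|^2\langle\delta\bd,\bD^2\delta\bd\rangle$, while Cauchy--Schwarz for the $\bD$--weighted inner product gives $\langle u,\bD u\rangle\,\langle\delta\bd,\bD\delta\bd\rangle$. To pass from the latter to the bound as printed you replace $\langle u,\bD u\rangle$ by $\|u\|^2$, which requires the largest eigenvalue of $\bD$ to be at most $1$; nothing in Proposition \ref{prop1} gives that. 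By that proposition $\bD$ is, up to sign, either the covariance matrix $\bC$ of $\bh$ under $f^*$ or its inverse (the statement and its justification disagree on which), and in neither case is its operator norm bounded by $1$ in general --- for instance $\bC^{-1}$ has arbitrarily large eigenvalues when $\bC$ is nearly singular. So your argument, as written, establishes a correct inequality (with $\langle u,\bD u\rangle\langle\delta\bd,\bD\delta\bd\rangle$ or with $\|u\|^2\|\bD\delta\bd\|^2$ on the right) but not the one displayed in the theorem; the displayed right--hand side appears to be a misstatement of one of these, and the paper's one--line proof does not settle which. A secondary point you should not gloss over: Proposition \ref{prop1}({\bf 4}) defines $\bD$ as the \emph{negative} of a positive definite matrix, so $\langle\delta\bd,\bD\delta\bd\rangle\le 0$ as literally defined, and the stated inequality cannot hold for a nonzero left--hand side; one must take $\bD$ to be the corresponding positive matrix (or insert absolute values) for either your bound or the theorem's to make sense.
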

The proof follows easily from (\ref{chan1}). In that inequality, the Cauchy-Schwartz inequality in $\mathbb{R}^K$ is used. Also, taking limits as $N\rightarrow\infty$ in (\ref{chan1}), we obtain another proof of the convergence of $\hat{f}_N^*$ to $f^*.$\\
What is interesting about ({\bf 2}) in Theorem (\ref{sd3}), is the possibility of combining it with Chebyshev's inequality to obtain rates of convergence. It is not hard to verify that
$$\mathbb{P}(\|D^{1/2}(\hat{\bd}_N -\bd)\| > a) \leq \frac{tr(\bSigma_N\bD)}{a^2}= \frac{tr(\bD\bSigma)}{Na^2},$$
\noindent where $\|\cdot\|$ is the Euclidean norm in $\mathbb{R}^K,$ and
$$\bSigma_N = E_{\mathbb{P}}[\delta\bd\delta\bd^t] = \frac{1}{N}E_\mathbb{P}[\left(\bh(X)-\bd\right)\left(\bh(X)-\bd\right)^t] = \frac{1}{N}\bSigma(\bh).$$
\begin{corollary}\label{cor1}
With the notations introduced in Theorem (\ref{sd3}) and two lines above,
$$\begin{array}{l}
\mathbb{P}\left(|\int g(x)\hat{f}_N^*(x)dm(x) - \int g(x)f^*(x)dm(x)| \leq a\right)\\
\geq 1 - \|\int\bh(x)g(x)f^*(x)-\bd\int g(x)f^*(x)\|^2tr(\bD\bSigma(\bh))/Na^2.
\end{array}$$
\end{corollary}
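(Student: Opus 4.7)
The plan is to combine Theorem~\ref{sd3}(2) with the Chebyshev estimate displayed immediately before the statement of the corollary; both inputs are already in hand, so the work is essentially reorganizing them and passing to the complementary event. To keep the bookkeeping clean, let me abbreviate the deterministic prefactor as
$$c(g) \;:=\; \left\|\int\bh(x)g(x)f^*(x)dm(x) \;-\; \bd\int g(x)f^*(x)dm(x)\right\|.$$

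First, I would rewrite the quadratic form on the right-hand side of Theorem~\ref{sd3}(2) as $\langle\delta\bd,\bD\delta\bd\rangle = \|\bD^{1/2}\delta\bd\|^2$, which is legitimate under the positivity convention on $\bD$ implicit in the Chebyshev estimate recorded just above the corollary. Taking the square root of the inequality in Theorem~\ref{sd3}(2) then yields the pathwise bound
$$\left|\int g(x)\hat{f}^*_N(x)dm(x) - \int g(x)f^*(x)dm(x)\right| \;\leq\; c(g)\,\|\bD^{1/2}\delta\bd\|.$$
Consequently, the event $\{|\int g\,\hat{f}^*_N\,dm - \int g\,f^*\,dm|>a\}$ is contained in the event $\{\|\bD^{1/2}\delta\bd\|>a/c(g)\}$, assuming $c(g)>0$ (if $c(g)=0$ the inequality in the corollary is trivially satisfied at the linear order at which Theorem~\ref{sd3}(2) operates).

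Next, I would apply the Chebyshev bound stated two lines before the corollary, but with threshold $a/c(g)$ in place of $a$. This gives
$$\mathbb{P}\!\left(\|\bD^{1/2}\delta\bd\|>a/c(g)\right) \;\leq\; \frac{c(g)^2\,\mathrm{tr}(\bD\bSigma(\bh))}{Na^2}.$$
Using the event-inclusion established above and passing to complements produces the claimed lower bound on $\mathbb{P}(|\int g\hat{f}^*_N\,dm - \int g f^*\,dm|\leq a)$.

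The only genuine subtlety, and really the main (conceptual) obstacle, is that Theorem~\ref{sd3}(2) inherits an implicit $o(\delta\bd)$ error from equation~(\ref{chan1}), which in turn comes from the first-order expansion of Theorem~\ref{theo2}. To turn the corollary into a fully rigorous finite-$N$ statement one should either propagate this remainder through the Chebyshev step or phrase the conclusion asymptotically as $N\to\infty$. Since $\|\delta\bd\|=O_P(N^{-1/2})$ by Theorem~\ref{lln} together with the second-moment bound on $\bh$, the omitted term contributes at order $o(1/N)$ to the probability estimate and does not affect the leading behavior displayed in the corollary.
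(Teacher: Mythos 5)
Your proposal is correct and follows exactly the route the paper intends: it combines the square root of the bound in Theorem~\ref{sd3}({\bf 2}) with the Chebyshev estimate $\mathbb{P}(\|\bD^{1/2}\delta\bd\|>t)\leq tr(\bD\bSigma(\bh))/Nt^2$ at threshold $t=a/c(g)$ and passes to complements. Your added remarks on the positivity convention for $\bD$ and on the $o(\delta\bd)$ remainder inherited from (\ref{chan1}) are appropriate caveats that the paper leaves implicit.
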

{\bf Comment} If we take $g=I_A$, we obtain a simple estimate of the speed of decay of
$|\int_A\hat{f}_N^*(x)dm(x)-\int_Af^*(x)dm(x)|$ to zero, or of the speed of convergence of $\int_A\hat{f}_N^*(x)dm(x)$ to $\int_Af^*(x)dm(x)$ if you prefer.
As far as the fluctuations around the mean, consider the following two possibilities.
\begin{theorem}\label{sd4}
With the notations introduced in Theorem (\ref{theo2}) and in the identity  (\ref{chan1}), we have
$$\sqrt{N}\left(\hat{f}^*_N(x) - f^*(x)\right) = -f^*(x)\langle(\bh(x) -\bd),\bD\left(\sqrt{N}(\delta\bd) \right)\rangle  \sim  N(0,\sigma^2(x))$$
in law as $N\rightarrow \infty.$ Also, for any bounded, Borel measurable $g(x)$
$$\begin{array}{l}
\sqrt{N}\left(\int g(x)\hat{f}_N^*(x)dm(x) - \int g(x)f^*(x)dm(x)\right) \\
= \langle\left(\int\bh(x)g(x)f^*(x)-\bd\int g(x)f^*(x)\right),\bD\left(\sqrt{N}(\delta\bd)\right)\rangle \sim N(0,\sigma^2(g)).
\end{array}$$
in law. Above, $\sigma^2(x) = \langle\bv(x),\bSigma(\bh)\bv(x)\rangle$, $\sigma^2(g) =  \langle\bv(g),\bSigma(\bh)\bv(g)\rangle$, where $\bv(x)=f^*(x)\bD^{1/2}\left(\bh(x)-\bd\right)$ and $\bv(g) = \int g(x)\bv(x)dm(x).$
\end{theorem}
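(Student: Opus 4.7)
The strategy is a straightforward application of the delta method: combine the first-order expansion (\ref{chan1}) of Theorem \ref{theo2} with the multivariate central limit theorem for the empirical moment vector. The only probabilistic ingredient is that $\hat{\bd}_N-\bd$ is the average of $N$ i.i.d.\ centered vectors $\bh(X_n)-\bd$ of covariance $\bSigma(\bh)$, so by the multivariate CLT
$$
\sqrt{N}\,\delta\bd \;=\; \sqrt{N}(\hat{\bd}_N-\bd) \;\Longrightarrow\; \bxi \sim N(0,\bSigma(\bh))
$$
in law as $N\to\infty$. All remaining work is a deterministic linear calculation driven by (\ref{chan1}) and the smoothness of $\bd\mapsto\blambda^*(\bd)$ guaranteed by item 3 of Proposition \ref{prop1}.

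For the pointwise statement I would rewrite (\ref{chan1}) as
$$
\sqrt{N}\bigl(\hat{f}^*_N(x)-f^*(x)\bigr) \;=\; -f^*(x)\bigl\langle\bh(x)-\bd,\,\bD(\sqrt{N}\delta\bd)\bigr\rangle + \sqrt{N}\,o(\delta\bd),
$$
and observe that, for fixed $x$, the vector $-f^*(x)(\bh(x)-\bd)\in\mathbb{R}^M$ is a deterministic coefficient. The continuous mapping theorem applied to the linear functional $\bxi\mapsto -f^*(x)\langle\bh(x)-\bd,\bD\bxi\rangle$ then produces a centered Gaussian limit of variance $\sigma^2(x)=\langle\bv(x),\bSigma(\bh)\bv(x)\rangle$ with $\bv(x)=f^*(x)\bD^{1/2}(\bh(x)-\bd)$, while Slutsky's lemma disposes of the remainder: because $\bd\mapsto f^*_{\bd}(x)$ is $C^1$ at the true $\bd$ and $\sqrt{N}\delta\bd$ is tight, one has $\sqrt{N}\,o(\delta\bd)=o_{\Prob}(1)$. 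The functional version is obtained in exactly the same way by integrating the expansion against $g(x)\,dm(x)$, commuting the finite-dimensional scalar product with the integral, and replacing $-f^*(x)(\bh(x)-\bd)$ by $\bv(g)=\int g(x)\bv(x)\,dm(x)$; the boundedness of $g$ is used only to push the remainder through Fubini.

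The main technical obstacle is the usual delta-method gap, namely converting the informal ``up to $o(\delta\bd)$'' of Theorem \ref{theo2} into a quantitative bound that survives multiplication by $\sqrt{N}$. Pointwise this is essentially automatic: Proposition \ref{prop1} grants as many continuous derivatives of $\bd\mapsto\blambda^*(\bd)$ as needed, so a second-order Taylor expansion of $f^*_{\bd}(x)$ in $\bd$ controls the remainder by $O(\|\delta\bd\|^2)$, which vanishes after multiplication by $\sqrt{N}$ because $\E\|\delta\bd\|^2=O(1/N)$. For the integrated statement one additionally needs this bound to be integrable in $x$ against $g(x)\,dm(x)$; this would follow from a local uniform control of the Hessian of $f^*_{\bd}(x)$ in $\bd$ together with the boundedness of $g$ and the finiteness of the second moments of $\bh$ under $f^*$.
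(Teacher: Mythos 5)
Your proof follows exactly the route the paper intends: the paper's entire argument is the one-line remark that one applies the central limit theorem to $\sqrt{N}(\hat{\bd}_N-\bd)$ and combines it with the linearization (\ref{chan1}). Your extra care with the $o(\delta\bd)$ remainder (second-order Taylor control of $\bd\mapsto\blambda^*(\bd)$ via Proposition \ref{prop1}, plus tightness of $\sqrt{N}\delta\bd$) correctly fills in the delta-method step the paper leaves implicit.
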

The proof of the assertions is standard. It involves applying the central limit theorem to the vector variable $\sqrt{N}\left(\hat{\bh}_N - \bd\right).$

  \end{document}